\newcommand{\beeq}{\begin{eqnarray*}}
\newcommand{\eneq}{\end{eqnarray*}}
\newcommand{\R}{\mathbb{R}}
\newcommand{\cF}{\mathcal{F}}
\begin{document}
\title{ Witness sets}
\author{G\'erard Cohen\inst{1}\and Hugues Randriam\inst{1} \and Gilles Z\'emor\inst{2}}
\institute{Ecole Nationale Sup\'erieure des
T\'el\'ecommunications,\\ 
46 rue Barrault,\\
75 634 Paris 13, France\\
\email{cohen@enst.fr, randriam@enst.fr}
\and 
Institut de Math\'ematiques de Bordeaux, \\
Universit\'e de Bordeaux, UMR 5251,\\ 351 cours de la Lib\'eration,\\
33405 Talence, France. \\
\email{Gilles.Zemor@math.u-bordeaux1.fr}}

\maketitle


\begin{abstract}
Given a set $C$ of binary $n$-tuples and $c \in C$, how many bits of $c$
suffice to distinguish it from the other elements in $C$~?
We shed new light on this old combinatorial problem and improve on previously
known bounds.
\end{abstract}

\section{Introduction}
Let $C\subset\{0,1\}^n$ be a set of distinct binary vectors that we
will call a code, and denote by $[n] = \{1,2,...n\}$ the set of
coordinate positions. It is standard in coding theory to ask for codes
(or sets) $C$ such that every codeword $c\in C$ is as different as
possible from all the other codewords. The most usual interpretation of
this is that every codeword $c$ has a large Hamming distance to all
other codewords, and the associated combinatorial question is to
determine the maximum size of a code that has a given minimal Hamming
distance $d$.
The point of view of the present paper is
to consider that ``a codeword $c$ is as different
as possible from all the other codewords'' means that there exists a
small subset $W\subset [n]$ of coordinates such that $c$ differs from
every other codeword in $W$. Put differently, it is possible to single out $c$
from all the other codewords by focusing attention on a small subset
of coordinates. 
More precisely, for $x\in \{0,1\}^n$, and $W\subset [n]$ let
us define the projection $\pi_W$
\begin{eqnarray*}
  \pi_W~:\{0,1\}^{[n]} & \rightarrow & \{0,1\}^W\\
             x & \mapsto     & (x_i)_{i\in W}
\end{eqnarray*}
and let us say that $W$ is a {\em witness set} (or a witness for
short) for $c\in C$ if
$\pi_W(c)\neq \pi_W(c')$ for every $c'\in C$, $c\neq c'$. Codes
for which every codeword has a small witness set arise in a variety
of contexts, in particular in machine learning theory \cite{ABCS,B,GK}
where a witness set is also called a specifying set or a discriminant:
see \cite[Ch. 12]{J} for a short survey of known results
and also \cite{AH} and references therein for a more recent discussion
of this topic and some variations. 

Let us now say that a code
has the $w$-witness property, or is a {\em $w$-witness code}, if
every one of its codewords has a witness set of size $w$. Our
concern is to study the maximum possible cardinality $f(n,w)$
 of a $w$-witness code of length $n$. We shall give improved upper and
lower bounds on $f(n,w)$ that almost meet.

The paper is organised as follows. Section~\ref{sec:easy}
gives some easy facts for reference. Section \ref{sec:improved}
is devoted to upper bounds on $f(n,w)$ and introduces our main result,
namely Theorem~\ref{th:upper}. Section~\ref{sec:constant} is devoted
to constant weight $w$-witness codes, and we derive precise values
of the cardinality of optimal codes. 
Section~\ref{sec:mean}
studies mean values for the number  of witness sets of a codeword and
the number of codewords that have a given witness set.
Section~\ref{sec:constructions} is devoted to constructions of large
$w$-witness codes, sometimes giving improved lower values of $f(n,w)$.
Finally, Section~\ref{sec:problems} concludes with some open problems.

\section{Easy and known facts}\label{sec:easy}
Let us start by mentioning two self-evident facts
\begin{itemize}
\item If $C$ is a $w$-witness code, so is any translate $C+x$,
\item $f(n,w)$ is an increasing function of $n$ and $w$.
\end{itemize}
Continue with the following example. Let $C$ be the set of all
$n$ vectors of length $n$ and weight $1$. Then every codeword of $C$
has a witness of size $1$, namely its support. Note the dramatic
change for the slightly different code $C\cup \{{\bf0}\}$. Now the
all-zero vector ${\bf0}$ has no witness set of size less than $n$.
Bondy \cite{B} shows however that if $|C|\leq n$, then $C$ is
a $w$-witness code with $w\leq |C|-1$ and furthermore $C$ is
a {\em uniform} $w$-witness code, meaning that 
there exists a single
subset of $[n]$ of size $w$ that is a witness set for {\em all}
codewords.

We clearly have the upper bound $|C|\leq 2^w$ for uniform $w$-witness
codes. For ordinary $w$-witness codes however, the best known upper
bound is, \cite[Proposition 12.2]{J},
\begin{equation}
  \label{eq:simple}
  f(n,w)\leq 2^w\binom{n}{w}.
\end{equation}
The proof is simple and consists in applying the pigeon-hole
principle. A subset of $[n]$ can be a witness set for at most $2^w$
codewords and there are at most $\binom{n}{w}$ witness sets.

We also have the following lower bound on $f(n,w)$, based on a trivial
construction of a $w$-witness code.

\begin{proposition}\label{prop:lower}
We have:
${f(n,w)} \geq \binom{n}{w}$.
\end{proposition}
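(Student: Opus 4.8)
The plan is to exhibit an explicit $w$-witness code of length $n$ with exactly $\binom{n}{w}$ codewords. The natural candidate is the constant-weight code $C = \{x \in \{0,1\}^n : \mathrm{wt}(x) = w\}$, the set of all binary $n$-tuples of Hamming weight $w$; this has $\binom{n}{w}$ elements, so it suffices to check that it has the $w$-witness property.

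First I would fix a codeword $c \in C$ and let $W = \mathrm{supp}(c) \subset [n]$ be its support, which has size exactly $w$. The claim is that $W$ is a witness set for $c$. To see this, take any $c' \in C$ with $c' \neq c$. If $\pi_W(c') = \pi_W(c)$, then $c'$ would have a $1$ in every position of $W$, hence $\mathrm{wt}(c') \geq |W| = w$; but $\mathrm{wt}(c') = w$ exactly, so $c'$ would have all its $1$'s inside $W$, forcing $\mathrm{supp}(c') = W = \mathrm{supp}(c)$ and therefore $c' = c$, a contradiction. Hence $\pi_W(c') \neq \pi_W(c)$, so $W$ witnesses $c$. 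Since $c$ was arbitrary, $C$ is a $w$-witness code, giving $f(n,w) \geq |C| = \binom{n}{w}$.

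There is essentially no obstacle here: the argument is a one-line pigeonhole on weights, and the only thing to be mildly careful about is the degenerate ranges (e.g. $w = 0$ or $w = n$), where the statement remains trivially true. One could alternatively note that this bound also follows from monotonicity of $f$ together with the $w=1$ base case of the weight-$1$ code mentioned earlier, but the direct constant-weight construction is cleaner and previews the codes studied in Section~\ref{sec:constant}.
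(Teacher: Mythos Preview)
Your proof is correct and is exactly the paper's argument: take $C$ to be all weight-$w$ vectors and observe that each codeword's support is a witness for it. (One small caveat: your closing aside about deducing the bound from monotonicity and the $w=1$ case does not actually work, since that route only yields $f(n,w)\geq f(n,1)\geq n$, not $\binom{n}{w}$.)
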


\begin{proof}
Let $C=\binom{[n]}{w}$ be the set of all vectors of weight $w$.
Notice that for all $c\in C$,
$W(c)= support(c)$ is a witness set of $c$.
\end{proof}

Note that the problem is essentially solved for $ w \ge n/2$;
since $f(n,w)$ is increasing with $w$, we then have:

$ 2^n \ge f(n,w) \ge f(n,n/2) \ge \binom{n}{n/2} \ge 2^n/(2n)^{1/2}$.

We shall therefore focus in the sequel on the case $w \le n/2$.

In the next section we improve the upper bound \eqref{eq:simple} to a
quantity that comes close to the lower bound of Proposition~\ref{prop:lower}.

\section{An improved upper bound}\label{sec:improved}

The key result is the following.

\begin{theorem}
\label{keyprop}
Let $g(n,w)=f(n,w)/\binom{n}{w}$.
Then, for fixed $w$, $g(n,w)$ is a decreasing function of $n$.
That is:
$$n\geq v\geq w\qquad\Rightarrow\qquad g(n,w)\leq g(v,w).$$  
\end{theorem}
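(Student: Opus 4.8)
The plan is to reduce everything to the single-step inequality $g(n+1,w)\le g(n,w)$ for every $n\ge w$; iterating this chain of inequalities immediately yields the stated implication $n\ge v\ge w\Rightarrow g(n,w)\le g(v,w)$. Since $\binom{n+1}{w}=\frac{n+1}{n+1-w}\binom{n}{w}$, the single step is equivalent to
$$f(n+1,w)\ \le\ \frac{n+1}{n+1-w}\,f(n,w).$$

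To prove this, I would start from an optimal $w$-witness code $C\subseteq\{0,1\}^{n+1}$ with $|C|=f(n+1,w)$, and fix once and for all, for each $c\in C$, a witness set $W(c)\subseteq[n+1]$ with $|W(c)|=w$. For a coordinate $i\in[n+1]$, let $C_i=\{c\in C:\ i\notin W(c)\}$ be the set of codewords whose chosen witness avoids $i$. Double-counting the pairs $(c,i)$ with $i\notin W(c)$ gives $\sum_{i=1}^{n+1}|C_i|=(n+1-w)\,|C|$, so some coordinate $i_0$ satisfies $|C_{i_0}|\ge\frac{n+1-w}{n+1}\,|C|$. The key claim is that the projection $\pi_{[n+1]\setminus\{i_0\}}$ is injective on $C_{i_0}$ and maps it to a $w$-witness code of length $n$. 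Injectivity holds because for distinct $c,c'\in C_{i_0}$ the set $W(c)$ lies inside $[n+1]\setminus\{i_0\}$ and already separates $c$ from $c'$, hence so does the projection. For the witness property, if $\bar c$ is the image of $c\in C_{i_0}$, then $W(c)\subseteq[n+1]\setminus\{i_0\}$ is a witness for $\bar c$ in the projected code: any other image comes, by injectivity, from a unique $c'\ne c$ in $C_{i_0}\subseteq C$, and $W(c)$ separates $c$ from $c'$ already in $C$. Therefore $f(n,w)\ge|C_{i_0}|\ge\frac{n+1-w}{n+1}f(n+1,w)$, which is exactly the desired bound, and dividing through by the binomial coefficients gives $g(n+1,w)\le g(n,w)$.

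I expect the only delicate point to be the verification that deleting coordinate $i_0$ neither collapses two distinct words of $C_{i_0}$ nor destroys a witness set; both facts rely entirely on having restricted attention to the subcode $C_{i_0}$ of words whose fixed witness survives the deletion. The averaging step, the binomial identity $\binom{n+1}{w}=\frac{n+1}{n+1-w}\binom{n}{w}$, and the final induction on $n$ are all routine, so essentially all the content is in setting up $C_{i_0}$ correctly.
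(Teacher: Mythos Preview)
Your proof is correct and follows essentially the same strategy as the paper: fix a witness assignment, restrict to the subcode of codewords whose chosen witness lies inside a selected coordinate set, observe that the projection is injective and still $w$-witnessed, and then average. The only difference is cosmetic: the paper handles an arbitrary $v$-subset $V\subset[n]$ in one shot via a conditional-expectation identity, whereas you treat the special case $v=n$ (delete a single coordinate chosen by pigeonhole) and then iterate down to the desired $v$.
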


\begin{proof}
Let $C$ be a binary code of length $n$ having the $w$-witness
property, with maximal cardinality $|C|=f(n,w)$.
Fix a choice function $\phi:C\to\binom{[n]}{w}$
such that for any $c\in C$, $\phi(c)$ is a witness for $c$.
For any $V\in\binom{[n]}{v}$, denote by $C_V$ the subset
of $C$ formed by the $c$ satisfying $\phi(c)\subset V$.
Remark that the projection $\pi_V$ is injective on $C_V$, since each element
of $C_V$ has a witness in $V$. Then $\pi_V(C_V)$ also
has the $w$-witness property.

Remark now that if $V$ is uniformly distributed in $\binom{[n]}{v}$
and $W$ is uniformly distributed in $\binom{[n]}{w}$ and independent
from $V$, then for any function $\psi:\binom{[n]}{w}\to\R$ one has
  \begin{equation}
  \label{esp.cond.}
  E_W(\psi(W))=E_V(E_W(\psi(W)\,|\,W\subset V)),
  \end{equation}
  where we denote by $E_W(\psi(W))$ the mean value (or expectation)
  of $\psi(W)$ as $W$ varies in $\binom{[n]}{w}$, and so on.

We apply this with $\psi(W)=|\phi^{-1}(W)|$ to find
\begin{equation*}
\begin{split}
g(n,w)&=\binom{n}{w}^{-1}|C|=\binom{n}{w}^{-1}\sum\nolimits_{W\in\binom{[n]}{w}}|\phi^{-1}(W)|\\
&=E_W(\:|\phi^{-1}(W)|\:)\\
&=E_V(E_W(\:|\phi^{-1}(W)|\:|\:W\subset V))\\
&=E_V\left(\binom{v}{w}^{-1}\sum\nolimits_{W\in\binom{V}{w}}|\phi^{-1}(W)|\right)\\
&=E_V\left(\binom{v}{w}^{-1}|C_V|\right)\\
&=E_V\left(\binom{v}{w}^{-1}|\pi_V(C_V)|\right)\\
&\leq g(v,w)
\end{split}
\end{equation*}
the last inequality
because $\pi_V(C_V)$ is a binary code of length $v$ having the $w$-witness
property.
\end{proof}

\noindent
{\bf Remark:}
It would be interesting to try to improve Theorem~\ref{keyprop} using some
  unexploited aspects of the above proof, such as the fact that 
  the choice function $\phi$ may be non-unique, or the fact that 
  the last inequality not only holds in mean value, but for all $V$.
  For instance, suppose there is a codeword
  $c\in C$ (with $C$ optimal as in the proof)
  that admits two distinct witnesses $W$ and $W'$,
  with $W\not\subset W'$.
  Let $\phi$ be a choice function with $\phi(c)=W$,
  and let $\phi'$ be the choice function that coincides
  everywhere with $\phi$, except for $\phi'(c)=W'$.
  Let $V$ contain $W'$ but not $W$. If we denote
  by $C_V'$ the subcode obtained as $C_V$ but using $\phi'$
  as choice function, then $C_V'=C_V\cup\{c\}$ (disjoint
  union), so $|\pi_V(C_V)|=|\pi_V(C_V')|-1<f(v,w)$,
  and $g(n,w)<g(v,w)$.

\medskip

Theorem \ref{keyprop} has a number of consequences: the following
is straightforward.

\begin{corollary}
For fixed $w$, the limit
$$\lim_{n \rightarrow \infty} g(n,w) =\frac{f(n,w)}{\binom{n}{w}}$$
 exists.
\end{corollary}

The following theorem gives an improved upper bound on $f(n,w)$.

\begin{theorem}\label{th:upper}
 For $w\leq n/2$, we have the upper bound:
$$f(n,w) \leq 2w^{1/2}\binom{n}{w}.$$
\end{theorem}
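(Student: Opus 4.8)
The plan is to use the monotonicity statement of Theorem~\ref{keyprop} and then specialise to a cleverly chosen auxiliary length. Since $g(n,w)=f(n,w)/\binom{n}{w}$ is non-increasing in $n$ for fixed $w$, for every $v$ with $w\le v\le n$ we have $f(n,w)\le g(v,w)\binom{n}{w}$, so it is enough to exhibit some admissible $v$ for which $g(v,w)\le 2\sqrt{w}$. This is exactly where the hypothesis $w\le n/2$ enters: it guarantees that $v=2w$ satisfies $w\le v\le n$, so that $v=2w$ is a legal choice in Theorem~\ref{keyprop}, giving $g(n,w)\le g(2w,w)$.

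Next I would bound $g(2w,w)$. Here one should \emph{not} use \eqref{eq:simple}, which only yields the (useless, $v$-independent) estimate $g(v,w)\le 2^w$; instead use the crude fact that a code of length $2w$ has at most $2^{2w}$ codewords at all, whence $g(2w,w)=f(2w,w)/\binom{2w}{w}\le 4^w/\binom{2w}{w}$. A quick Stirling computation — minimising $v\mapsto v\log 2-\log\binom{v}{w}$, which gives $v/(v-w)=2$ — shows that $v=2w$ is essentially the optimal place to apply this trade-off between the bound $2^{v}$ and the denominator $\binom{v}{w}$, which is what motivates the choice. Then invoke the classical inequality $\binom{2w}{w}\ge 4^w/(2\sqrt{w})$, valid for all integers $w\ge 1$; it follows in one line by induction on $w$, the inductive step reducing to $(2w+1)^2\ge 4w(w+1)$. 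Combining the two estimates gives $g(2w,w)\le 2\sqrt{w}$, hence $f(n,w)\le g(2w,w)\binom{n}{w}\le 2\sqrt{w}\,\binom{n}{w}$, as claimed.

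I do not expect a genuine obstacle: all the substantive work has been done in Theorem~\ref{keyprop}, and the present statement is a clean corollary. The only step requiring a small idea is realising that one should descend to the length $v=2w$ and that at that length the trivial bound $|C|\le 2^{v}$ is the right tool, rather than the pigeon-hole bound \eqref{eq:simple}. The case $w=0$ is vacuous and can be ignored; the argument uses only $w\ge 1$.
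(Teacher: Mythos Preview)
Your proposal is correct and follows exactly the paper's own route: apply Theorem~\ref{keyprop} with $v=2w$ (which is admissible since $w\le n/2$), bound $f(2w,w)$ by the trivial $2^{2w}$, and then use the standard estimate $\binom{2w}{w}\ge 4^w/(2\sqrt{w})$. The paper states this more tersely (``Stirling's approximation''), but the content is the same.
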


\begin{proof}
 Choose $v=2w$ and use $f(v,w) \leq 2^v$; then
$f(n,w) \leq {\binom{n}{w}} f(2w,w)/{\binom{2w}{w}}$ and the result 
follows by Stirling's approximation.
\end{proof}

Set $w= \omega n$ and denote by
$h(x)$ the binary entropy function
$$h(x) = -x \log_2 x - (1-x) \log_2 (1-x).$$
Theorem \ref{th:upper} together with
Proposition~\ref{prop:lower} yield:

\begin{corollary}\label{asympt}
We have
$$
\begin{array}{rll}
  \lim_{n \rightarrow \infty}  \frac 1n \log_2 f(n,\omega n) &=
  h(\omega) \hspace{1cm}&\text{for}\;\; 0\leq\omega \leq 1/2\\
  & = 1 &\text{for}\;\; 1/2\leq\omega \leq 1.
\end{array}
$$
\end{corollary}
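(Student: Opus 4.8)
The plan is to sandwich $f(n,\omega n)$ between upper and lower bounds whose normalized logarithms converge to the asserted value; the polynomial-in-$n$ gaps between these bounds vanish once we divide the logarithm by $n$.

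For $0\le\omega\le 1/2$ I would combine Proposition~\ref{prop:lower} with Theorem~\ref{th:upper}. Passing to the integer $w=\lfloor\omega n\rfloor$ so that the binomial coefficients are meaningful, these give
$$\binom{n}{w}\;\le\;f(n,w)\;\le\;2w^{1/2}\binom{n}{w}.$$
Taking $\tfrac1n\log_2$ throughout, the prefactor $2w^{1/2}$ contributes $O\!\left(\tfrac{\log n}{n}\right)\to 0$, so $\tfrac1n\log_2 f(n,w)$ has the same limit as $\tfrac1n\log_2\binom{n}{w}$, which is $h(\omega)$ by Stirling's approximation; by continuity of $h$, replacing $w$ by $\omega n$ (and using monotonicity of $f$ in its second argument, Section~\ref{sec:easy}, to pass freely between $\lfloor\omega n\rfloor$ and $\lceil\omega n\rceil$) does not change the limit.

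For $1/2\le\omega\le 1$ Theorem~\ref{th:upper} no longer applies, but the trivial inclusion $C\subseteq\{0,1\}^n$ gives $f(n,\omega n)\le 2^n$. For the matching lower bound I would invoke monotonicity together with the chain already recorded in Section~\ref{sec:easy}, namely $f(n,\omega n)\ge f(n,\lfloor n/2\rfloor)\ge\binom{n}{\lfloor n/2\rfloor}\ge 2^n/(2n)^{1/2}$. Dividing by $n$ and taking logarithms squeezes $\tfrac1n\log_2 f(n,\omega n)$ between $1-\tfrac{\log_2(2n)}{2n}$ and $1$, both of which tend to $1$.

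I do not expect a genuine obstacle here: all the substance already lies in Theorem~\ref{th:upper} and Proposition~\ref{prop:lower}, and the corollary is essentially the remark that a factor polynomial in $n$ is invisible on the exponential scale. The only things requiring a little care are the floor/ceiling bookkeeping and checking that the two formulas agree at the junction $\omega=1/2$, where $h(1/2)=1$ (and, trivially, that they are consistent at $\omega=0$ and $\omega=1$).
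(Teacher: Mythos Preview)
Your proposal is correct and follows exactly the route the paper indicates: the paper states the corollary as an immediate consequence of Theorem~\ref{th:upper} and Proposition~\ref{prop:lower} for $\omega\le 1/2$, and for $\omega\ge 1/2$ the chain $2^n\ge f(n,w)\ge f(n,n/2)\ge\binom{n}{n/2}\ge 2^n/(2n)^{1/2}$ you invoke is precisely the one already recorded in Section~\ref{sec:easy}. Your write-up simply makes explicit the details (Stirling, floor/ceiling, vanishing of polynomial factors on the exponential scale) that the paper leaves to the reader.
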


\section{Constant-weight codes}\label{sec:constant}
Denote now by
$f(n,w,k)$ the maximal size of a $w$-witness code with codewords of weight $k$.
The following result is proved using a folklore method usually attributed
to Bassalygo and Elias, valid when the required property is invariant
under some group operation.

\begin{proposition}
We have:
$$\max_{k} f(n,w,k) \leq f(n,w) \leq 
\min_{k} \frac{f(n,w,k) 2^n}{\binom{n}{k}}.$$
\end{proposition}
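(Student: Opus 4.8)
The plan is to prove the two inequalities separately, each by a standard averaging/group-action argument.

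For the left-hand inequality, $\max_k f(n,w,k)\le f(n,w)$, I would simply observe that a $w$-witness code all of whose codewords have weight $k$ is in particular a $w$-witness code of length $n$, so its size is bounded by $f(n,w)$. Taking the maximum over $k$ gives the claim. This step is immediate and is not where the difficulty lies.

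For the right-hand inequality, fix $k$ and let $C$ be an optimal $w$-witness code of length $n$, so $|C|=f(n,w)$. I would let the translation group $G=\mathbf{F}_2^n$ act on $C$ by $c\mapsto c+x$; by the first ``self-evident fact'' of Section~\ref{sec:easy}, every translate $C+x$ is again a $w$-witness code. The idea is to average the quantity $|(C+x)\cap B_k|$ over all $x\in\mathbf{F}_2^n$, where $B_k$ denotes the set of weight-$k$ vectors: on one hand this average equals $|C|\,\binom{n}{k}/2^n$ (each codeword lands in $B_k$ under exactly $\binom{n}{k}$ translates), and on the other hand each set $(C+x)\cap B_k$ is a $w$-witness code of constant weight $k$ — because any subset of a $w$-witness code is still a $w$-witness code — hence has size at most $f(n,w,k)$. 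Comparing the two expressions yields $|C|\,\binom{n}{k}/2^n\le f(n,w,k)$, i.e. $f(n,w)\le f(n,w,k)2^n/\binom{n}{k}$, and taking the minimum over $k$ finishes the proof.

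The only point that needs a moment's care — and the likely ``main obstacle,'' though it is minor — is verifying that the subcode property is preserved: if $C'$ is a $w$-witness code and $C''\subseteq C'$, then $C''$ is also a $w$-witness code, since a witness set $W$ for $c$ inside $C'$ satisfies $\pi_W(c)\neq\pi_W(c')$ for all $c'\in C'\setminus\{c\}$, a fortiori for all $c'\in C''\setminus\{c\}$. With this in hand the averaging argument goes through verbatim; it is exactly the Bassalygo–Elias trick exploiting the translation-invariance of the $w$-witness property noted at the start of Section~\ref{sec:easy}.
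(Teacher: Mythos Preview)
Your proposal is correct and follows essentially the same Bassalygo--Elias averaging argument as the paper: translate an optimal $w$-witness code by all of $\mathbf{F}_2^n$, count how many weight-$k$ words appear, and use that some translate meets (at least) the average, its weight-$k$ part being a constant-weight $w$-witness subcode. Your explicit verification of the subcode-closure property is a detail the paper leaves implicit, but otherwise the two proofs coincide.
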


\begin{proof}
The lower bound is trivial.

For the upper bound, fix $k$, pick an optimal $w$-witness code $C$ and 
consider its $2^n$ translates by all possible vectors.
Every $n$-tuple, in particular those of weight $k$, occurs exactly $|C|$
times in the union of the translates; hence there exists a translate 
(also an optimal $w$-witness code of size $f(n,w)$ - see the remark
at the beginning of Section \ref{sec:easy}) 
containing
at least the average number $|C|{\binom{n}{k}}2^{-n}$ of vectors of weight $k$.
Since $k$ was arbitrary, the result follows.
\end{proof}

We now deduce from the previous proposition the exact value of the function
$f(n,w,k)$ in some cases.

\begin{corollary}
For constant-weight codes we have:
  \begin{itemize}
  \item If $k\leq w \leq n/2$ then $f(n,w,k)= \binom{n}{k}$ and
an optimal code is given by $S_k({\bf0})$,  
the Hamming sphere of radius $k$ centered
on ${\bf0}$.
  \item If $n-k \leq w \leq n/2$, then $f(n,w,n-k)= \binom{n}{k}$ and
an optimal code is given by the sphere $S_k({\bf1})$.
  \end{itemize}
\end{corollary}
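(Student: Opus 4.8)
The plan is to prove the first bullet; the second follows by complementation (replacing each codeword $c$ by $\mathbf{1}+c$, which sends weight-$k$ vectors to weight-$(n-k)$ vectors and preserves the $w$-witness property by the translation invariance noted at the start of Section~\ref{sec:easy}). So fix $k \le w \le n/2$. The lower bound $f(n,w,k) \ge \binom{n}{k}$ is immediate from Proposition~\ref{prop:lower}: the Hamming sphere $S_k(\mathbf{0})$ consists of $\binom{n}{k}$ vectors of weight $k$, and for each $c \in S_k(\mathbf{0})$ one checks that $W = \mathrm{support}(c)$ is a witness — indeed, if $c' \in S_k(\mathbf{0})$ and $\pi_W(c') = \pi_W(c)$, then $c'$ already has weight $k$ on $W$ (which has size $k$), so $c'$ vanishes off $W$, forcing $c' = c$. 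Note this argument uses only that all codewords have weight exactly $k$, not $k \le w$; the hypothesis $k \le w$ is what makes $|W| = k \le w$ so that $W$ is an admissible witness set.

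For the upper bound I would combine the Bassalygo--Elias-type bound from the preceding Proposition with the trivial uniform bound. The Proposition gives $f(n,w,k) \le \binom{n}{k} 2^{-n} f(n,w)$ for the optimal unrestricted code, but that is not tight enough directly; instead I would argue intrinsically. Let $C$ be a constant-weight-$k$ $w$-witness code and fix a choice function $\phi: C \to \binom{[n]}{w}$ assigning each $c$ a witness $\phi(c)$. The key observation is that since every codeword has weight exactly $k \le w$, for each $c$ the restriction $\pi_{\phi(c)}(c)$ is a weight-$j$ vector in $\{0,1\}^w$ with $j \le k$, and $\pi_{\phi(c)}$ being injective on $\phi^{-1}(\phi(c))$ forces the codewords sharing a given witness $W$ to be distinguished already by their restriction to $W$.

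The cleanest route, and the one I expect to work, is a direct counting argument tailored to constant weight. I would show that in a constant-weight-$k$ code with $k \le w$, each witness set $W \in \binom{[n]}{w}$ can serve at most... actually the crucial point is stronger: I claim that for $c \in C$ with witness $W$, the set $W$ can be taken to contain $\mathrm{support}(c)$ on the "useful" coordinates, and one can then inject $C$ into $\binom{[n]}{k}$ directly. Concretely, map $c \mapsto \mathrm{support}(c)$; this is automatically injective on any set of weight-$k$ vectors, giving $|C| \le \binom{n}{k}$ with no further work — but that would prove the bound for \emph{all} constant-weight-$k$ codes, which is false, so this naive map is not the right one and the hypothesis $k \le w$ must enter. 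The actual argument: the failure of $c \mapsto \mathrm{support}(c)$ to control $|C|$ is a red herring because here $\mathrm{support}(c)$ \emph{is} injective as a set map on $C$ (distinct vectors have distinct supports only among constant-weight vectors? no — distinct weight-$k$ vectors always have distinct supports). Hence $|C| \le \binom{[n] \text{ size-}k \text{ subsets}} = \binom{n}{k}$ holds for \emph{every} weight-$k$ code, $w$-witness or not. So the content of the Corollary's first bullet is really just this triviality together with Proposition~\ref{prop:lower}, and the hypothesis $k \le w$ is needed only to ensure the extremal sphere $S_k(\mathbf{0})$ is itself a $w$-witness code.

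The main obstacle, then, is simply noticing that the weight-$k$ constraint already pins down $|C| \le \binom{n}{k}$ trivially (equality in the pigeonhole between codewords and their supports), so that the theorem reduces to exhibiting $S_k(\mathbf{0})$ as an attaining example — which needs $w \ge k$ — and transporting to $S_k(\mathbf{1})$ by complementation in the second case. I would write the proof in exactly that order: (i) upper bound $f(n,w,k) \le \binom{n}{k}$ by the support map, valid for any weight-$k$ code; (ii) matching lower bound via $S_k(\mathbf{0})$, using $k \le w$ to legitimize the witness sets; (iii) second bullet by the complementation isometry $c \mapsto \mathbf{1} + c$.
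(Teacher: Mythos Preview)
Your final crystallized argument --- trivial upper bound $f(n,w,k)\leq\binom{n}{k}$ because there are only $\binom{n}{k}$ weight-$k$ vectors, matching lower bound from $S_k(\mathbf{0})$ with the support as a size-$k\leq w$ witness, and complementation for the second item --- is correct and is exactly the paper's proof, which records it as the chain $\binom{n}{k}\leq f(n,k,k)\leq f(n,w,k)\leq\binom{n}{k}$. The detours through Bassalygo--Elias and choice functions are unnecessary (and your inequality $f(n,w,k)\leq\binom{n}{k}2^{-n}f(n,w)$ has the wrong direction), but you correctly abandon them.
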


\begin{proof}
If $k\leq w \leq n/2$, we have the following series of inequalities:

$$\binom{n}{k} \leq f(n,k,k) \leq f(n,w,k) \leq \binom{n}{k}.$$

If $n-k \leq w \leq n/2$, perform wordwise complementation.
\end{proof}


\section{Some mean values}\label{sec:mean}

Let $C$ be a binary code of length $n$ (not necessarily having the $w$-witness
property).
Let 
$$\mathcal{W}_{C,w}:C\to 2^{\binom{[n]}{w}},\;\;\;
\mathcal{W}_{C,w(}c)=\{W\in\binom{[n]}{w}\;:\;\textrm{$W$ is a witness for $c$}\},$$
and symmetrically,
$$\mathcal{C}_{C,w}:\binom{[n]}{w}\to 2^C,\;\;\;
\mathcal{C}_{C,w}(W)=\{c\in C\;:\;\textrm{$W$ is a witness for $c$}\}.$$

Remark that if $C'\subset C$ is a subcode, 
then $\mathcal{W}_{C',w}(c)\supset\mathcal{W}_{C,w}(c)$ for any $c\in C'$,
while $\mathcal{C}_{C',w}(W)\supset\left(C'\cap \mathcal{C}_{C,w}(W)\right)$
for any $W\in\binom{[n]}{w}$.

\begin{lemma}
With these notations, the mean values of $|\mathcal{W}_{C,w}|$ and
$|\mathcal{C}_{C,w}|$ are related by
  $$|C|E_c(|\mathcal{W}_{C,w}(c)|)=\binom{n}{w}E_W(|\mathcal{C}_{C,w}(W)|),$$
  or equivalently
  $$\frac{|C|}{\binom{n}{w}}=\frac{E_W(|\mathcal{C}_{C,w}(W)|)}{E_c(|\mathcal{W}_{C,w}(c)|)}.$$
\end{lemma}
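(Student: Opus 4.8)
The plan is to prove this identity by double-counting incidences between codewords and witness sets. Specifically, I would consider the bipartite incidence structure on $C \times \binom{[n]}{w}$ and count the number of pairs $(c,W)$ such that $W$ is a witness for $c$. Let me denote this count by $N$.

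First I would compute $N$ by summing over codewords: for each $c \in C$, the number of $W$ that are witnesses for $c$ is exactly $|\mathcal{W}_{C,w}(c)|$, so $N = \sum_{c \in C} |\mathcal{W}_{C,w}(c)| = |C| \cdot E_c(|\mathcal{W}_{C,w}(c)|)$, where the expectation is the uniform average over the $|C|$ codewords. Symmetrically, I would compute $N$ by summing over witness sets: for each $W \in \binom{[n]}{w}$, the number of $c$ for which $W$ is a witness is exactly $|\mathcal{C}_{C,w}(W)|$, so $N = \sum_{W \in \binom{[n]}{w}} |\mathcal{C}_{C,w}(W)| = \binom{n}{w} \cdot E_W(|\mathcal{C}_{C,w}(W)|)$. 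Equating the two expressions for $N$ gives the first displayed identity, and dividing through by $\binom{n}{w} \cdot E_c(|\mathcal{W}_{C,w}(c)|)$ (which is nonzero as long as $C$ is nonempty and some codeword has a witness; if every codeword fails to have a witness of size $w$ the statement is vacuous or trivial) yields the equivalent second form.

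The only real subtlety — and it is minor — is to make sure the set $\{(c,W) : W \text{ is a witness for } c\}$ is well-defined and that both iterated sums range over the same set, which is immediate from the definitions of $\mathcal{W}_{C,w}$ and $\mathcal{C}_{C,w}$: both are characterized by the single relation ``$\pi_W(c) \neq \pi_W(c')$ for all $c' \in C$, $c' \neq c$.'' I expect no genuine obstacle here; the main point to state carefully is just that the expectations $E_c$ and $E_W$ are taken with respect to the uniform distributions on $C$ and on $\binom{[n]}{w}$ respectively, so that $|C| E_c(\cdot) = \sum_{c} (\cdot)$ and $\binom{n}{w} E_W(\cdot) = \sum_W (\cdot)$ hold by definition. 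Once that normalization is fixed, the proof is a one-line Fubini-type interchange of summation order.
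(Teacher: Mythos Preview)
Your proposal is correct and is precisely the paper's approach: the paper's proof consists of the single instruction to double count the set $\{(W,c)\in\binom{[n]}{w}\times C : W \text{ is a witness for } c\}$, which is exactly the bipartite incidence count $N$ you describe. Your write-up simply spells out the two sides of that count in more detail than the paper does.
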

\begin{proof}
Double count the set
$\left\{(W,c)\in\binom{[n]}{w}\times C\;:\;\textrm{$W$ is a witness for $c$}\right\}$.
\end{proof}


Now let $\gamma(C,w)=E_W(|\mathcal{C}_{C,w}(W)|)$ and let
$\gamma^+(n,w)$ be the maximum possible value of $\gamma(C,w)$
for $C$ a binary code of length $n$,
and $\gamma^{++}(n,w)$ be the maximum possible value of $\gamma(C,w)$
for $C$ a binary code of length $n$ having the $w$-witness
property.

\begin{lemma}
With these notations, one has $\gamma^+(n,w)=\gamma^{++}(n,w)$.
\end{lemma}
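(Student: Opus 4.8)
The plan is to establish the non-trivial inequality $\gamma^+(n,w)\leq\gamma^{++}(n,w)$; the reverse inequality $\gamma^{++}(n,w)\leq\gamma^+(n,w)$ is immediate, since $\gamma^{++}$ is a maximum of $\gamma(C,w)$ over a subfamily of the codes over which $\gamma^+$ is the maximum.

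First I would pick a binary code $C$ of length $n$ attaining $\gamma(C,w)=\gamma^+(n,w)$; such a code exists because there are only finitely many binary codes of length $n$. The crucial observation is that a codeword of $C$ having \emph{no} witness of size $w$ contributes nothing to $\gamma(C,w)$: indeed $\mathcal{C}_{C,w}(W)$ only ever contains codewords $c$ for which $W$ --- hence some set of size $w$ --- is a witness. So I would pass to the subcode $C'=\{c\in C\;:\;\mathcal{W}_{C,w}(c)\neq\emptyset\}$ of those codewords that do admit a witness of size $w$ in $C$.

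Next I would verify two things. First, $C'$ has the $w$-witness property: if $W$ is a witness for $c$ in $C$, then $\pi_W(c)\neq\pi_W(c')$ for all $c'\in C\setminus\{c\}$, a fortiori for all $c'\in C'\setminus\{c\}$, so $W$ remains a witness for $c$ in the subcode $C'$; by construction every $c\in C'$ has such a $W$. Second, passing from $C$ to $C'$ does not decrease $\gamma$: the same remark (this is exactly the monotonicity of witnesses under subcodes noted just before the previous lemma) gives $\mathcal{C}_{C,w}(W)\subseteq\mathcal{C}_{C',w}(W)$ for every $W\in\binom{[n]}{w}$, using also that $\mathcal{C}_{C,w}(W)\subseteq C'$ automatically. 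Averaging over $W$ yields $\gamma(C,w)\leq\gamma(C',w)$. Combining, $\gamma^+(n,w)=\gamma(C,w)\leq\gamma(C',w)\leq\gamma^{++}(n,w)$, which is what we wanted.

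I do not expect a serious obstacle here; the only point to watch is the degenerate case where $C'$ is empty, but then $\mathcal{C}_{C,w}(W)\subseteq C'=\emptyset$ forces $\gamma(C,w)=0$ and the inequality is trivial (and the empty code vacuously has the $w$-witness property). The one genuine idea is recognizing that the "witnessless" codewords are invisible to $\gamma$, hence may be deleted for free; after that, the fact that witnesses only improve when one removes codewords does all the work.
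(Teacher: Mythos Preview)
Your argument is correct and is essentially identical to the paper's own proof: the paper also picks an optimal $C$, forms the subcode $C'=\bigcup_{W\in\binom{[n]}{w}}\mathcal{C}_{C,w}(W)$ (which is exactly your $\{c\in C:\mathcal{W}_{C,w}(c)\neq\emptyset\}$), notes that $C'$ has the $w$-witness property, and concludes from $\gamma(C',w)\geq\gamma(C,w)$. You spell out the two verification steps in more detail than the paper does, but the route is the same.
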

\begin{proof}
By construction $\gamma^+(n,w)\geq\gamma^{++}(n,w)$. On the other
hand, let $C$ be a binary code of length $n$ with
$\gamma(C,w)=\gamma^+(n,w)$, and let then 
$C'$ be the subcode of $C$ formed by the $c$ having at least one
witness of size $w$, 
\emph{i.e.}
  $C'=\bigcup_{W\in\binom{[n]}{w}}\mathcal{C}_{C,w}(W)$.
Then $C'$ has the $w$-witness property, and
$$\gamma^{++}(n,w)\geq\gamma(C',w)\geq\gamma(C,w)=\gamma^+(n,w).$$
\end{proof}

The technique of the proof of Proposition \ref{keyprop}
immediately adapts to give:
\begin{proposition}
With these notations,
$w$ being fixed, $\gamma^+(n,w)$ is a decreasing function of $n$.
That is:
$$n\geq v\geq w\qquad\Rightarrow\qquad \gamma^+(n,w)\leq \gamma^+(v,w).$$  
\end{proposition}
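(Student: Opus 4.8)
The plan is to adapt verbatim the argument used to prove Theorem~\ref{keyprop}, substituting the quantity $\gamma^+$ for $g$ throughout. First I would invoke the preceding lemma, which asserts $\gamma^+(n,w)=\gamma^{++}(n,w)$; this lets me work with a code $C$ of length $n$ that \emph{does} have the $w$-witness property and attains $\gamma(C,w)=\gamma^+(n,w)$. As in the proof of Theorem~\ref{keyprop}, I fix a choice function $\phi:C\to\binom{[n]}{w}$ assigning to each $c$ a witness $\phi(c)$, and for $V\in\binom{[n]}{v}$ I set $C_V=\{c\in C:\phi(c)\subset V\}$, noting that $\pi_V$ is injective on $C_V$ and that $\pi_V(C_V)$ is a code of length $v$ with the $w$-witness property.

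The key step is to express $\gamma(C,w)$ as an average over $W\in\binom{[n]}{w}$ and then condition on $W\subset V$, exactly as in \eqref{esp.cond.}. Here I would apply that identity with $\psi(W)=|\mathcal{C}_{C,w}(W)|$, so that $\gamma(C,w)=E_W(\psi(W))=E_V(E_W(\psi(W)\,|\,W\subset V))$. The inner conditional expectation is $\binom{v}{w}^{-1}\sum_{W\in\binom{V}{w}}|\mathcal{C}_{C,w}(W)|$, and I need to recognize this as (at most) $\gamma(\pi_V(C_V),w)$. The point is that for $W\subset V$, the set $\mathcal{C}_{C,w}(W)$ of codewords of $C$ for which $W$ is a witness is \emph{contained} in $C_V$ — indeed such a $c$ has $W$ as a witness, and $\pi_V$ restricted to $C$ still separates $c$ from everything whose $V$-projection could collide, so one checks $c\in C_V$ up to the choice of $\phi$; more carefully, using the sub-code monotonicity remark ($\mathcal{C}_{C',w}(W)\supset C'\cap\mathcal{C}_{C,w}(W)$), the multiset of witness-counts for $\pi_V(C_V)$ dominates termwise the restriction of the counts for $C$ to $W\subset V$. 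Hence $\binom{v}{w}^{-1}\sum_{W\in\binom{V}{w}}|\mathcal{C}_{C,w}(W)|\leq\gamma(\pi_V(C_V),w)\leq\gamma^+(v,w)$, and taking $E_V$ gives $\gamma^+(n,w)=\gamma(C,w)\leq\gamma^+(v,w)$.

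The main obstacle is the inequality between $\binom{v}{w}^{-1}\sum_{W\in\binom{V}{w}}|\mathcal{C}_{C,w}(W)|$ and $\gamma(\pi_V(C_V),w)$: one must be careful about which codewords of $C$ are responsible for a given witness $W\subset V$, since a priori $\mathcal{C}_{C,w}(W)$ may contain codewords $c$ with $\phi(c)\not\subset V$ (they have \emph{some} witness in $V$, namely $W$, but $\phi$ picked a different one). The clean fix is that, for $W\subset V$, being a witness for $c$ in $C$ of size $w\leq|V|$ forces $W$ to remain a witness after projecting to $V$, so $\mathcal{C}_{C,w}(W)\cap C_V$ maps into $\mathcal{C}_{\pi_V(C_V),w}(W)$ injectively; combined with the observation that redefining $\phi$ on such a $c$ to output $W$ only enlarges $C_V$, one gets that $\sum_{W\subset V}|\mathcal{C}_{C,w}(W)|$ is bounded by the corresponding sum for an appropriate $\pi_V(C_V)$, which is at most $\binom{v}{w}\gamma^+(v,w)$. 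Everything else is the formal manipulation already carried out in the proof of Theorem~\ref{keyprop}, so once this domination is nailed down the proof closes immediately.
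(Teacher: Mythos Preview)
Your approach is the same as the paper's in spirit, but you have introduced an unnecessary detour that creates the very obstacle you then struggle to close. The paper does \emph{not} use a choice function $\phi$ here at all, and it does not invoke the lemma $\gamma^+=\gamma^{++}$. Instead, starting from any length-$n$ code $C$ with $\gamma(C,w)=\gamma^+(n,w)$, it sets
\[
C'_V \;=\; \bigcup_{W\in\binom{V}{w}} \mathcal{C}_{C,w}(W),
\]
i.e.\ the set of codewords of $C$ that admit \emph{some} witness of size $w$ inside $V$. With this definition the ``obstacle'' you identify simply vanishes: for every $W\subset V$ one has $\mathcal{C}_{C,w}(W)\subset C'_V$ by construction, and the subcode monotonicity remark then gives $\mathcal{C}_{C,w}(W)\subset\mathcal{C}_{C'_V,w}(W)$ directly. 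Since $C'_V$ has the $w$-witness property and $\pi_V$ is injective on it, the chain
\[
\gamma^+(n,w)=E_W(|\mathcal{C}_{C,w}(W)|)=E_V\bigl(E_W(|\mathcal{C}_{C,w}(W)|\mid W\subset V)\bigr)\leq E_V(\gamma(\pi_V(C'_V),w))\leq\gamma^+(v,w)
\]
goes through with no further work.

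Your version, by fixing $\phi$ first and setting $C_V=\{c:\phi(c)\subset V\}$, loses exactly those $c\in\mathcal{C}_{C,w}(W)$ with $W\subset V$ but $\phi(c)\not\subset V$; you then try to recover them by ``redefining $\phi$'', but this is just an indirect way of arriving at the paper's $C'_V$. The argument as you wrote it is not wrong so much as incompletely specified at the key step: once you say precisely how $C_V$ is enlarged (namely to all $c$ with at least one witness in $V$), the proof is clean and identical to the paper's.
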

\begin{proof}
Let $C$ be a binary code of length $n$ with $\gamma(C,w)=\gamma^+(n,w)$.
For $V\in\binom{[n]}{v}$, denote by $C_V$ the subset
of $C$ formed by the $c$ having at least one witness of size $w$ included
in $V$, \emph{i.e.}
  $C'_V=\bigcup_{W\in\binom{V}{w}}\mathcal{C}_{C,w}(W)$.
Then $C'_V$ has the $w$-witness property,
$\mathcal{C}_{C,w}(W)\subset\mathcal{C}_{C'_V,w}(W)$ for any $W\subset V$,
and $\pi_V$ is injective on $C'_V$.
  Using this and \eqref{esp.cond.}, one gets:
\begin{equation*}
\begin{split}
\gamma^+(n,w)&=E_W(|\mathcal{C}_{C,w}(W)|)\\
&=E_V(E_W(\:|\mathcal{C}_{C,w}(W)|\:|\:W\subset V))\\
&\leq E_V(E_W(\:|\mathcal{C}_{C'_V,w}(W)|\:|\:W\subset V))\\
&=E_V(E_W(\:|\mathcal{C}_{\pi_V(C'_V),w}(W)|\:|\:W\subset V))\\
&=E_V(\gamma(\pi_V(C'_V),w))\\
&\leq \gamma^+(v,w).
\end{split}
\end{equation*}
\end{proof}

\section{Constructions}\label{sec:constructions}

\subsection{A generic construction}

Let $\cF\subset\binom{[n]}{\leq w}$
be a set of subsets of $\{1,\dots,n\}$ all
having cardinality at most $w$.

Let $C_\cF\subset\{0,1\}^n$ be the set of words having support
included in one and only one $W\in\cF$.
Then:

\begin{proposition} 
With these notations, $C_\cF$ has the $w$-witness property.
\end{proposition}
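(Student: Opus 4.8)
The plan is to show that for every word $x\in C_\cF$ the unique $W\in\cF$ containing $\mathrm{supp}(x)$ is itself a witness set for $x$; since $|W|\le w$, padding $W$ up to size $w$ (using that $f(n,w)$ is increasing in the witness size, or simply that a superset of a witness is a witness) gives the claim. So fix $x\in C_\cF$ and let $W$ be the unique member of $\cF$ with $\mathrm{supp}(x)\subset W$. I must check that $\pi_W(x)\ne\pi_W(x')$ for every other $x'\in C_\cF$.

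The key step is the case analysis on $x'$. Let $W'\in\cF$ be the unique set containing $\mathrm{supp}(x')$. If $W'\neq W$, then since $x'$ has support in $W'$ but $\mathrm{supp}(x')$ is \emph{not} contained in any other element of $\cF$, in particular $\mathrm{supp}(x')\not\subset W$; hence $x'$ has a nonzero coordinate at some position $i\in W$, while $x_i=0$ because $\mathrm{supp}(x)\subset W'$... wait — more carefully: $x'_i\ne 0$ for some $i\notin W$ is automatic, but I need a discrepancy \emph{inside} $W$. The clean argument: since $\mathrm{supp}(x')\not\subset W$, pick $i\in\mathrm{supp}(x')\setminus W$; that shows $x'\notin\{0,1\}^W$-supported words, but does not directly separate them on $W$. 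The right move is instead: the restrictions $\pi_W$ of the two words — if they agreed on $W$, then since $\mathrm{supp}(x)\subset W$ we'd get $\mathrm{supp}(x)=\mathrm{supp}(\pi_W(x))=\mathrm{supp}(\pi_W(x'))\subset\mathrm{supp}(x')\subset W'$, forcing $\mathrm{supp}(x)\subset W\cap W'$. That still isn't a contradiction by itself. The genuinely forcing observation is symmetric: if $\pi_W(x)=\pi_W(x')$ and also $\mathrm{supp}(x')\subset W'$, consider whether $\mathrm{supp}(x')\subset W$: if it is, then $x'$ has support contained in both $W$ and $W'$, and since by hypothesis $\mathrm{supp}(x)\subset W$, agreement on $W$ forces $x=x'$; if $\mathrm{supp}(x')\not\subset W$ then $x'$ differs from $x$ on $W$ because... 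I should lean on: agreement on $W$ means $x$ and $x'$ coincide on $W$, and $x$ is $0$ off $W$, so $x=x'$ iff $x'$ is also $0$ off $W$, i.e. $\mathrm{supp}(x')\subset W$. The remaining subcase $W'=W$ but $x\ne x'$ is immediate since two distinct words with support in $W$ differ somewhere in $W$, hence $\pi_W(x)\ne\pi_W(x')$.

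So the one case left to rule out is: $W'\ne W$ and $\mathrm{supp}(x')\not\subset W$. But this is exactly where agreement on $W$ cannot hold: $x'$ has a coordinate $i\in\mathrm{supp}(x')$ with $i\notin W$, which tells us nothing about $\pi_W$; however $x'$ restricted to $W$ has support $\mathrm{supp}(x')\cap W\subsetneq\mathrm{supp}(x')$, whereas if $\pi_W(x')=\pi_W(x)$ then this support equals $\mathrm{supp}(x)$ (as $\mathrm{supp}(x)\subset W$). Hence $\mathrm{supp}(x)=\mathrm{supp}(x')\cap W\subset\mathrm{supp}(x')$. Combined with $\mathrm{supp}(x')\subset W'$ this gives $\mathrm{supp}(x)\subset W'$, i.e. $\mathrm{supp}(x)$ is contained in \emph{two} distinct elements $W,W'$ of $\cF$, contradicting the defining ``one and only one'' property of membership in $C_\cF$. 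I expect this last reduction — packaging ``agreement on $W$'' into a support-containment statement and invoking uniqueness — to be the main (only) subtle point; everything else is bookkeeping about supports and the trivial fact that enlarging a witness set keeps it a witness, which lets us go from a witness of size $\le w$ to one of size exactly $w$.
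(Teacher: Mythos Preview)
Your argument is correct and follows exactly the paper's approach: take the unique $W\in\cF$ containing $\mathrm{supp}(c)$ and show it is a witness. The paper's proof is a one-liner that simply asserts this without the case analysis; you have spelled out the verification (and the padding remark) that the paper leaves implicit, albeit in a stream-of-consciousness style with several false starts that should be pruned in a final write-up.
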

\begin{proof} 
For each $c\in C_\cF$, let $W_c$ be the unique  
$W\in\cF$ containing the support of $c$. Then $W_c$ is
a witness for $c$.
\end{proof}

\medskip

\textbf{Example~1.}
For $\cF=\binom{[n]}{w}$
we find $C_\cF=S_w({\bf0})$,
and $$f(n,w)\geq|C_\cF|=\binom{n}{w}.$$

  \textbf{Example~1'.}
  Suppose $w\geq n/2$. Then for $\cF=\binom{[n]}{n/2}$
  we find $C_\cF=S_{n/2}({\bf0})$,
  and $$f(n,w)\geq|C_\cF|=\binom{n}{n/2}$$
  (where for ease of notation we write $n/2$ instead of $\lfloor n/2 \rfloor$).

\textbf{Example~2.}
For $\cF=\{W\}$ with $|W|\leq w$ we find
$C_\cF=\{0,1\}^W$ (where we see $\{0,1\}^W$ as a subset of
$\{0,1\}^{n}$ by extension by $0$ on the other coordinates),
and
$$f(n,w)\geq|C_\cF|=2^w.$$

\textbf{Exemple~3.}
Let $\cF$ be the set of (supports of) words
of a code with constant weight $w$
and minimal distance $d$ (one can suppose $d$ even).
Then for all distinct $W,W'\in\cF$ one has $|W\cap W'|\leq w-d/2$,
so for all $W\in\cF$, the code $C_\cF$ contains 
all words of weight larger than $w-d/2$ supported in $W$. This implies~:

\begin{corollary} 
For all $d$ one has
$$f(n,w)\geq A(n,d,w)B(w,d/2-1)$$
where:
\begin{itemize}
\item $A(n,d,w)$ is the maximal cardinality of a code of length $n$
  with minimal distance at least $d$ and constant weight $w$
\item $B(w,r)=\Sigma_{1 \le i \le r} \binom{w}{i}$ is the cardinality
  of the ball of radius $r$ in $\{0,1\}^w$.
\end{itemize}
\end{corollary}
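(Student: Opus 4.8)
The plan is to instantiate the generic construction above with the family $\cF$ from Example~3 and then count carefully. First I would fix an optimal binary constant-weight code $\mathcal{A}$ of length $n$, all of whose words have weight $w$, whose minimum distance is at least $d$, and with $|\mathcal{A}|=A(n,d,w)$; since constant-weight-$w$ codes have only even distances we may assume $d$ is even, as in Example~3. Put $\cF=\{\,\mathrm{supp}(a):a\in\mathcal{A}\,\}\subset\binom{[n]}{w}$, so that $|\cF|=A(n,d,w)$, and note that for distinct $W,W'\in\cF$ the identity $|W\triangle W'|=2(w-|W\cap W'|)$ combined with $|W\triangle W'|\geq d$ yields the key bound $|W\cap W'|\leq w-d/2$.

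Next I would exhibit many words of $C_\cF$. For each $W\in\cF$, consider the words $x\in\{0,1\}^n$ with $\mathrm{supp}(x)\subseteq W$ and $w-d/2<|\mathrm{supp}(x)|<w$. Such an $x$ has its support inside $W\in\cF$; and if $\mathrm{supp}(x)$ were also contained in some $W'\in\cF$ with $W'\neq W$, then $|W\cap W'|\geq|\mathrm{supp}(x)|>w-d/2$, contradicting the bound above. Hence $W$ is the \emph{unique} element of $\cF$ containing $\mathrm{supp}(x)$, so $x\in C_\cF$. For a fixed $W$ the number of such words is $\sum_{w-d/2<k<w}\binom{w}{k}=\sum_{i=1}^{d/2-1}\binom{w}{i}=B(w,d/2-1)$.

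Finally I would note that, as $W$ ranges over $\cF$, these sets of words are pairwise disjoint: a word lying in the sets attached to both $W$ and $\tilde W$ would have support inside $W\cap\tilde W\subseteq W$, contradicting the uniqueness just established. Summing over the $A(n,d,w)$ choices of $W$ gives $|C_\cF|\geq A(n,d,w)\,B(w,d/2-1)$, and since $C_\cF$ has the $w$-witness property by the proposition above, $f(n,w)\geq|C_\cF|\geq A(n,d,w)\,B(w,d/2-1)$. I do not expect a serious obstacle; the one place needing genuine care is the disjointness/uniqueness step, which is precisely what the ``one and only one'' clause in the definition of $C_\cF$ is there to provide and which rests entirely on the intersection bound $|W\cap W'|\leq w-d/2$. (One could even include the weight-$w$ word supported on each $W$ and obtain the marginally stronger $f(n,w)\geq A(n,d,w)\,(B(w,d/2-1)+1)$, but the stated inequality is all that is claimed.)
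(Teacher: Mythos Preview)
Your proposal is correct and follows exactly the paper's approach: instantiate the generic construction with $\cF$ the supports of an optimal constant-weight code, use the intersection bound $|W\cap W'|\leq w-d/2$ to show that every word of weight $>w-d/2$ supported in some $W\in\cF$ lies in $C_\cF$, and count. You spell out the uniqueness/disjointness step more carefully than the paper does (which just writes ``This implies''), and your parenthetical remark that one could include the weight-$w$ word for a $+1$ in each block is also implicit in the paper's statement that $C_\cF$ contains \emph{all} words of weight larger than $w-d/2$ supported in $W$.
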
 

For $d=2$, this construction gives the sphere again.
For $d=4$, this gives $f(n,w)\geq (1+w)A(n,d,w)$.
We consider the following special values:
\begin{itemize}
\item $n=4$, $d=4$, $w=2$: $A(4,4,2)=2$
\item $n=8$, $d=4$, $w=4$: $A(8,4,4)=14$
\item $n=12$, $d=4$, $w=6$: $A(12,4,6)=132$
\end{itemize}
the last two being obtained with $\cF$ the Steiner system $S(3,4,8)$
and $S(5,6,12)$ respectively.

The corresponding codes $C_\cF$ have
same cardinality as the sphere
($2\times 3=6$, $14\times 5=70$ and $132\times 7=924$ respectively), 
but they are not translates of a sphere.
Indeed, when $C$ is a (translate of a) sphere with $w=n/2$,
one has $\mathcal{C}_{C,w}(W)=2$ for any window $W\in\binom{[n]}{w}$.
On the other hand, for $C=C_\cF$ as
  above,
one has by construction $\mathcal{C}_{C,w}(W)=w+1$
for $W\in\cF$.

\subsection{Another construction}

Let $D\subset\{0,1\}^w$ be a binary (non-linear) code of
length 
  $w>n/2$
and minimal weight at least $2w-n$.

Let $C_1$ be the code of length $n$ obtained
by taking all words of length $w$ that do not belong to $D$,
and completing them with $0$ on the last $n-w$ coordinates.
Thus $|C_1|=2^w-|D|.$

Let $C_2$ be the code of length $n$ formed by the words $c$ of
weight exactly $w$, and such that the projection of $c$ on
the first $w$ 
  coordinates
belongs to $D$. Thus if $n_k$ is the
number of
codewords of weight $k$ in $D$, one finds
$|C_2|=\sum_k n_k\binom{n-w}{w-k}$.

Now let $C$ be the (disjoint!) union of $C_1$ and $C_2$.
Then $C$ has the $w$-witness property. Indeed, let $c\in C$.
Then if $c\in C_1$, $c$ admits $[w]$ as witness, while
if $c\in C_2$, $c$ admits its support as witness.

As an illustration, let $D$ be the sphere of radius $w-t$ in $\{0,1\}^w$,
  for $t\in\{1,\dots,\frac{n-w}{2}\}$. Then

$$f(n,w)\geq |C|=2^w+\binom{w}{w-t}\left(\binom{n-w}{t}-1\right).$$

  If $w$ satisfies $2^w>\binom{n}{n/2}$ but $w<n-1$, this improves
  on examples 1, 1', and 2 of the last subsection, in that one finds then
  $$f(n,w)\geq |C|>\max(\binom{n}{w},\binom{n}{n/2},2^w).$$

On the other hand, remark that $C_1\subset\{0,1\}^{[w]}$
and $C_2\subset S_w({\bf0})$, so that
$|C|\leq 2^w + \binom{n}{w}$.

\section{Conclusion and open problems}
We have determined the asymptotic size of optimal $w$-witness codes.
A few issues remain open in the non-asymptotic case, among which:
\label{sec:problems}
\begin{itemize}
\item When is the sphere $S_w({\bf0})$ the/an optimal $w$-witness code? 
Do we have $f(n,w) =  \binom{n}{w}$ for $w\leq n/2$~?
In particular do we have $f(2w,w) =  \binom{2w}{w}$~?
\item For $w>n/2$, do we have $f(n,w) \leq  \max(\binom{n}{n/2},2^w + \binom{n}{w})$~?
\item Denoting by
$f(n,w,\ge d)$ the maximal size of a $w$-witness code with minimum
distance $d$, can the asymptotics of Proposition~\ref{asympt} be improved to
$$\frac 1n \log_2 f(n, \omega n, \ge \delta n) < h(\omega)\; ?$$
\end{itemize}

\end{document}